\newtheorem{theorem}{Theorem}[section]
\newtheorem{lemma}[theorem]{Lemma}
\title{Electric Vehicle Charging Network Design under Congestion}
\author{
 Antoine Deza \\
  Department of Computing and Software\\
  McMaster University\\
  Hamilton, ON L8S 4L8 \\
  \texttt{deza@mcmaster.ca} \\
   \And
Kai Huang \\
  DeGroote School of Business\\
 McMaster University\\
  Hamilton, ON L8S 4L8 \\
  \texttt{khuang@mcmaster.ca} \\
  \And
 Carlos Aníbal Suárez \\
  Faculty of Natural Sciences and Mathematics\\
  Escuela Superior Polit\'ecnica del Litoral\\
  Guayaquil, Ecuador \\
  \texttt{carasuar@espol.edu.ec} \\
  Department of Computing and Software\\
  McMaster University\\
  Hamilton, ON L8S 4L8 \\
  \texttt{suarezhc@mcmaster.ca} \\
}
\begin{document}
\maketitle
\begin{abstract}
This paper presents an extension of a recently introduced multistage stochastic integer model designed for optimizing the deployment of charging stations under uncertainty. A key contribution of this work is incorporating additional constraints accounting for congestion management at charging stations. The solution approach combines a greedy heuristic with a branch-and-price algorithm, enabling the efficient handling of medium instances. In the branch-and-price algorithm, when the solution to the restricted master problem is not integer, a greedy heuristic and a local search procedure are conducted to obtain feasible solutions. Computational experiments illustrate the effectiveness of the proposed framework.
\end{abstract}


\section{INTRODUCTION}\label{INTRO}
Congestion at electric vehicle (EV) charging stations is a critical issue as the adoption of EVs increases. Usually, models for locating charging stations focused primarily on minimizing traveling distance and ensuring adequate coverage, avoiding considering the effects of congestion. This simplification can impact the user experience and the efficiency of charging networks. The current work extends an existing capacity expansion framework for charging stations,\citet{chen2022capacity}, by incorporating congestion in the constraints, addressing this gap in prior research.
The literature on EV charging station location embraces various approaches to tackle the congestion challenge, from stochastic programming to economic assessments, exhibiting the multi-criteria nature of the problem.  \cite{hung2022novel} addresses the location-routing problem for EV charging systems under stochastic conditions. They propose a data-driven method to solve the charging station location by formulating a partition-based clustering problem with size constraints. This methodology focuses on minimizing travel time through efficient routing and optimal location of charging stations. This approach is shown to be effective in urban settings adapting to various distances, vehicle speeds, and request distribution scenarios.  To handle congestion, the mean waiting time at each charging station is assumed from the beginning.
In contrast, \cite{tran2021stochasticity} uses a bi-level nonlinear optimization framework to address the deployment of fast-charging stations. The upper level minimizes total system costs, including capital, travel, and environmental costs, while the lower level models traveler routing behaviors under stochastic demand. The integration of the Cross-Entropy Method and the Method of Successive Average provides a meta-heuristic solution, showcasing how increased EV range and strategic infrastructure can mitigate congestion and improve system performance.  To analyze charging congestion at stations, the arrival rate at each charging station is defined as a function of the electric vehicles’ path flow.

Economic considerations are crucial in the deployment of charging infrastructure. \cite{xiang2016economic}  presents a novel planning framework for determining the location and size of charging stations by integrating the interactions between distribution and transportation networks.   To address the challenges of uncertain operational states and economic planning for charging stations from both power and transportation perspectives, \cite{xiang2016economic} introduces a novel planning framework. It includes origin-destination analysis to represent EV behaviors, utilizes an optimization assignment model to determine equilibrium traffic flow, and employs an M/M/s Queuing System to determine the capacity of the charging stations.
Similarly, \cite{alhazmi2017economical} introduces a two-stage economical staging planning method for matching EV charging demand with fast charging stations. The first stage assesses the distribution system’s capability to meet PEV demands, while the second stage focuses on optimal fast charging stations capacity planning.  The second stage problem is formulated as a nonlinear integer problem based on M/M/s queuing theory, aiming to minimize total investment costs.
\cite{bao2021optimal} explores a bi-level mixed nonlinear integer programming model for the optimal placement of charging stations in congested networks. The model balances the construction budget with network congestion by minimizing detours and maximizing route efficiency. The comparison of branch-and-bound and nested partitions algorithms shows that while branch-and-bound quickly finds global optima for low driving ranges, nested partitions offer near-optimal solutions more efficiently for higher ranges.  Traffic congestion at the edge level is modeled using a standard Bureau of Public Roads function.
\cite{kadri2020multi} investigates the multi-period and uncertain demand scenarios in locating EV fast charging stations. They use a multi-stage stochastic integer programming approach with Benders decomposition and genetic algorithms to maximize the expected satisfaction of recharging demand over time. The results underscore the benefits of accounting for demand uncertainties and the practical advantages of the proposed solution algorithms. No congestion

\cite{kabli2020stochastic} addresses the need for coordinated power grid expansion to support growing EV charging demands via a two-stage stochastic programming approach, using Sample Average Approximation and an enhanced Progressive Hedging algorithm. By linking power grid expansion with charging station location, they provide a framework for managing the energy requirements of geographically dispersed EVs and insights for decision-makers.  In this paper, we present an EV charging network capacity
expansion under congestion (EVCEC) model to facilitate the design of
public charging networks under different congestion scenarios.

\section{MODEL} \label{Model}
\cite{chen2022capacity} model avoid the congestion issue by adding more candidate locations or increasing the parameter value $M_j$. This paper introduces new constraints that account for the elements of a queuing system under the assumption of Poisson arrivals. The inclusion of congestion in the model proposed by~\cite{chen2022capacity} is substantiated by the following key results from~\cite{marianov1998probabilistic}:

\begin{enumerate}
   \item The service level constraint stating that the "probability that the facility at location $j \in \mathcal{J}$ has at most $b$ users in the queue $(k \leq b)$ should be at least $\alpha$" is equivalent to: 
   \begin{equation}
        \label{eq:Serra1}
        \sum_{k=0}^{m-1} 
        \frac{\bigl( m-k \bigr)m! m^b}{ k! } \frac{1}{\rho^{m+b+1-k}  } \geq \frac{1}{1 - \alpha}
    \end{equation}
   \item Let $L(m,b,\rho)$ denote the left-hand side of \ref{eq:Serra1}.   $L(m,b,\rho)$ is strictly decreasing with $\rho$.  Therefore, If $\rho_{\alpha}$ represents the value of $\rho$ that satisfies \ref{eq:Serra1} as an equality, the inequality $\rho \leq \rho_{\alpha}$ can be considered a linear counterpart of Equation \ref{eq:Serra1} when utilized as a constraint within a linear programming formulation.
\end{enumerate}

After assigning a specific level of service  $\alpha$, the value of $\rho_{\alpha}$ is computed by using any numerical root-finding method. This process will be iterated for each location  $j \in \mathcal{J}$ to find the corresponding value of $\rho_{\alpha,j}$ yielding in the set of equations:

\begin{center}
\begin{eqnarray}
\rho_j \leq \rho_{\alpha,j}  \quad \forall j \in \mathcal{J}
\end{eqnarray}
\end{center}

Since that $\rho_j = \frac{\lambda_j}{\mu_j}$, assuming $\mu_j = \mu$, and considering that $\lambda_j$ is a function of $x_{n,j}$ for $n \in \mathcal{N}$, $j \in \mathcal{J}$, $b$, and $w$, the following set of linear constraints can express the congestion behavior.

\begin{center}
\begin{alignat}{1}
 &  \sum_{i \in \mathcal{I}_{n,j}} \theta_{n,i} \bigg( w_{n,i} \alpha_{n,i,j} + b_{n,i}  \sum_{k \in \mathcal{J}_{n,i}} z_{n,i,j,k} \bigg) \leq \mu  \sum_{k=1}^{M_j} y_{n,j,k} \rho_{\alpha , k} \nonumber   \nonumber \\[2ex] 
& \forall n \in \mathcal{N} \quad \forall j \in \mathcal{J} \nonumber   \\[2ex]
&\sum_{k=1}^{M_j} y_{n,j,k} =  x_{n,j} \quad \forall n \in \mathcal{N} \quad \forall j \in \mathcal{J} \nonumber \\ 
& \sum_{k=1}^{M_j} k \hspace{1mm} y_{a(n),j,k} \leq \sum_{k=1}^{M_j} k \hspace{1mm} y_{n,j,k} \quad  \forall n \in \mathcal{N} \quad \forall j \in \mathcal{J} \quad k \in \lbrace 1,...,M_{j} \rbrace \nonumber
\end{alignat}
\end{center}

\begin{center}
\begin{table*}[h]
\scalebox{1.1}{
\begin{tabular}{@{}lll@{}}
\toprule
\textbf{Set}            &  & \textbf{Description}                                                                                                                              \\ \midrule
$\mathcal{I}$              &  & Set of zones                                                                                                                               \\
$\mathcal{J}$              &  & Set of all candidate locations for charging stations                                                                                       \\
$\mathcal{N}$              &  & Set of nodes in the scenario tree                                                                                                          \\ 
$\mathcal{I}_{n,j}$        &  & Set of zones near candidate location $j$ within the coverage $\mathcal{R}_{n,i}$ for all $n$ $\in$ $\mathcal{N}$ and $j$ $\in$ $\mathcal{J}$                                             \\
$\mathcal{J}_{n,i}$        &  & Set of candidate locations near zone $i$ within the coverage radius $\mathcal{R}_{n,i}$ for all $n$ $\in$ $\mathcal{N}$ and $i$ $\in$ $\mathcal{I}$                                       \\
\textbf{Parameter}         &  & \textbf{Description}                                                                                                                                \\
$\phi$           &  & Probability of node $n$ for all $n$ $\in$ $\mathcal{N}$                                                                                                      \\
$a(n)$           &  & Direct predecessor (a.k.a., father) of node $n$ for all $n$ $\in$ $\mathcal{N}$                                                                          \\
$s(n)$           &  & Set of direct successors (a.k.a., children) or node $n$ for all $n$ $\in$ $\mathcal{N}$                                                                      \\
$\mathcal{T}$       &  & Set of successors of node $n$ (including n) for all $n$ $\in$ $\mathcal{N}$                                                                                  \\
$c^s_{n,j}$      &  & Cost of building a new charging station located in $j$ at node $n$ for all $n$ $\in$ $\mathcal{N}$ and $j$ $\in$ $\mathcal{J}$                                                 \\
$c^p_{n,j}$      &  & Cost of adding a charger to the charging station located in $j$ at node $n$ for all $n$ $\in$ $\mathcal{N}$ and $j$ $\in$ $\mathcal{J}$                                        \\
$c^-s_{n,j}$     &  & Cost of operating a charging station located in $j$ at node $n$ for all $n$ $\in$ $\mathcal{N}$ and $j$ $\in$ $\mathcal{J}$                                                    \\
$c^{-p}_{n,j}$   &  & Cost of operating  a charger in the charging station located in $j$ at node $n$ for all $n$ $\in$ $\mathcal{N}$ and $j$ $\in$ $\mathcal{J}$                                    \\
$x_{0,j}$        &  & Binary parameter, initial charging station status, that is, $x_{0,1} = 1$ if and only if there is initially                                  \\
               &  & a charging station located in $j$ for all $j$ $\in$ $\mathcal{N}$                                                                                            \\
$y_{0,j}$        &  & Initial number of chargers in the charging station located in $j$ for all $j$ $\in$ $\mathcal{J}$                                                            \\
$M_j$            &  & A sufficiently large positive integer number that will not be exceeded by the number of chargers in                                        \\
               &  & the charging station located in $j$ for any $j$ $\in$ $\mathcal{J}$                                                                                          \\
$W_{n,j}$        &  & Charging demand of zone $i$ at node $n$ for all $n$ $\in$ $\mathcal{N}$ and $i$ $\in$ $\mathcal{I}$                                                                            \\
$w_{n,i}$        &  & Basic charging demand rate of zone $i$ without the influence of the number of charging facilities nearby                                          \\
               &  & at node $n$ for all $n$ $\in$ $\mathcal{N}$ and $i$ $\in$ $\mathcal{I}$                                                                                                      \\
$b_{n,i}$        &  & Influence coefficient of the number of charging facilities on the charging demand of zone $i$ at node                                        \\
               &  & $n$ for all $n$ $\in$ $\mathcal{N}$ and $i$ $\in$ $\mathcal{I}$                                                                                                              \\
$\theta_{n,i}$   &  & Coverage target for zone $i$ at node $n$ for all $n$ $\in$ $\mathcal{N}$ and $i$ $\in$ $\mathcal{I}$                                                                           \\
$e_{i,j}$        &  & $e_{ij}= e^{-a_i d_{i,j}}$ for a constant $a_i >0$ and the distance $d_{i,j}$ between zone $i$ and the candidate location $j$                     \\
               &  & for all $i$ $\in$ $\mathcal{I}$ and $j$ $\in$ $\mathcal{J}$                                                                                                                    \\
$C_j$            &  & Maximum units of charging demand a charger at candidate location $j$ can serve for all $j$ $\in$ $\mathcal{J}$                                               \\
$\alpha$            &  & Service level                                                                                                                           \\
$\rho_{j,\alpha}$      &  & Utilisation at location $j$ for a service level $\alpha$                                                                                    at charging station $j$ \\
$\mu$            &  & Service rate                                                                                                                           \\

\textbf{Variable}       &  & \textbf{Description}                                                                                                                                \\
$x_{n,j}$         &  & Binary variable, $x_{nj} = 1$ if and only if there exists a charging station in location $j$ at node $n$ for all \\
                 &  & $n$ $\in$ $\mathcal{N}$ and $j$ $\in$ $\mathcal{J}$             \\
$y_{n,j,k}$      &  & Binary variable, $y_{n,j,k} = 1$ if and only if there exists $k$ charging posts in location $j$ at node $n$ for all \\
                 &  &  $n$ $\in$ $\mathcal{N}$ and $j$ $\in$ $\mathcal{J}$            \\
$\alpha_{n,i,j}$ &  & Probability of drivers in zone $i$ going to charging station located in $\mathcal{J}$ to charge their EVs at node $n$ \\ 
                 &  & for all $n$ $\in$ $\mathcal{N}$, $i$ $\in$ $\mathcal{I}$ and $j$ $\in$ $\mathcal{J}$
\end{tabular}
}\caption {Notations}\label{ModelNotation}
\end{table*}
\end{center}

Therefore the proposed extension of the model introduced by \cite{chen2022capacity} is as follows:

\begin{alignat}{1}
\min \quad& \sum_{n \in \mathcal{N}}  \phi_n  \sum_{j \in \mathcal{J}} \bigg[ c_{n,j}^s (x_{n,j} - x_{a(n),j}) + c^p_{n,j}(\sum \limits_{k=1}^{M_j} k( y_{n,j,k} - y_{a(n),j,k}))\nonumber \\
& + c^{-s}_{n,j}x_{n,j}  
+  c^{-p}_{n,j}\sum  \limits_{k=1}^{M_j} k y_{n,j,k}  \bigg]
\label{eqn:Obj} \tag{3a}
\end{alignat}

\begin{center}
\begin{alignat}{2}
\label{eq:MPconstraint1}
s.t. &  \sum_{i \in \mathcal{I}_{n,j}} \theta_{n,i} \bigg( w_{n,i} \alpha_{n,i,j} + b_{n,i}  \sum_{k \in \mathcal{J}_{n,i}} z_{n,i,j,k} \bigg) \leq \mu  \sum_{k=1}^{M_j} y_{n,j,k} \rho_{\alpha , k} \nonumber \\ 
& \qquad \forall n \in \mathcal{N} \quad \forall j \in \mathcal{J}  \tag{3b}   \\ 
\label{eq:MPconstraint2}
&\sum_{k \in \mathcal{J}_{n,i}} x_{n,k} \geq 1 \quad \forall n \in \mathcal{N} \quad \forall i \in \mathcal{I} \tag{3c}  \\ 
\label{eq:MPconstraint3} 
&\sum_{k \in \mathcal{J}_{n,i}} e_{i,k} z_{n,i,j,k} =  e_{i,j}x_{n,j} \quad
\forall n \in \mathcal{N} \quad \forall i \in \mathcal{I} \quad \forall j \in \mathcal{J}_{n,i} \tag{3d} \\ 
\label{eq:MPconstraint4} 
&z_{n,i,j,k} \leq x_{n,k} \quad \forall n \in \mathcal{N} \quad \forall i \in \mathcal{I} \quad \forall j \in \mathcal{J}_{n,i} \quad \forall k \in \mathcal{J}_{n,i} \tag{3e}\\ 
\label{eq:MPconstraint5} 
&z_{n,i,j,k} \leq \alpha_{n,i,j} \quad \forall n \in \mathcal{N} \quad \forall i \in \mathcal{I} \quad \forall j \in \mathcal{J}_{n,i} \quad \forall k \in \mathcal{J}_{n,i}  \tag{3f}\\ 
\label{eq:MPconstraint6}
&z_{n,i,j,k} \geq \alpha_{n,i,j}+ x_{n,k}-1 \quad \forall n \in \mathcal{N} \quad \forall i \in \mathcal{I} \quad \forall j \in \mathcal{J}_{n,i} \quad \forall k \in \mathcal{J}_{n,i} \tag{3g} \nonumber 
\end{alignat}
\end{center}

\begin{center}
\begin{alignat}{2}
\label{eq:MPconstraint7} 
&\sum_{k=1}^{M_j} y_{n,j,k} =  x_{n,j} \quad \forall n \in \mathcal{N} \quad \forall j \in \mathcal{J} \tag{3h} \\  
\label{eq:MPconstraint8} 
&x_{a(n),j} \leq x_{n,j} \quad \forall n \in \mathcal{N} \quad \forall j \in \mathcal{J} \tag{3i} \\  
\label{eq:MPconstraint9} 
& \sum_{k=1}^{M_j} k \hspace{1mm} y_{a(n),j,k} \leq \sum_{k=1}^{M_j} k \hspace{1mm} y_{n,j,k} \quad  \forall n \in \mathcal{N} \quad \forall j \in \mathcal{J} \quad k \in \lbrace 1,...,M_{j} \rbrace \tag{3j} \\  
\label{eq:MPconstraint10} 
&x_{n,j} \in \lbrace 0,1 \rbrace \quad  \forall n \in \mathcal{N} \quad j \in \mathcal{J} \tag{3k}  \\  
\label{eq:MPconstraint11} 
&y_{n,j,k} \in  \lbrace 0,1 \rbrace \quad  \forall n \in \mathcal{N} \quad j \in \mathcal{J} \quad k=1,...,M_j \tag{3l} \\  
\label{eq:MPconstraint12} 
&z_{n,i,j,k} \geq 0 \quad \forall n \in \mathcal{N} \quad \forall i \in \mathcal{I} \quad j \in \mathcal{J}_{n,i} \quad k \in \mathcal{J}_{n,i}  \tag{3m}
\end{alignat}
\end{center}

The objective is to minimize the expected total cost of
installing and operating the charging facilities.  Constraints (\ref{eq:MPconstraint1}) ensure that the arrival rate does not exceed the service rate per hour, considering $M_j$ as the maximum quantity of charging posts or servers in each facility.  Note that restrictions (\ref{eq:MPconstraint2}) guarantee the coverage of all zones by a at least one station, whereas constraints (\ref{eq:MPconstraint3}) handle the estimated selection probabilities of drivers.  Constraints (\ref{eq:MPconstraint4}), (\ref{eq:MPconstraint5}), (\ref{eq:MPconstraint6}), and (\ref{eq:MPconstraint12}) are linearization constraints.  Constraints (\ref{eq:MPconstraint7}) ensure the presence of a certain number of charging posts at a location only if a charging station is allocated to that location.  In constraints (\ref{eq:MPconstraint8})–(\ref{eq:MPconstraint9}), we ensure both that the charging stations do not disappear and the number of charging posts within these stations will not decrease over time.

\section{TWO ALGORITHMS}
\label{2Alg}
The model proposed in Section \ref{Model} results in a significantly larger search space than the one proposed by \cite{chen2022capacity} due to the larger number of binary variables.
In this Section, we present enhancements to the approximation and heuristic algorithms introduced by \cite{chen2022capacity}.

\subsection{Approximation algorithm}\label{AppAlgo}
 Let REVCEC denote the relaxation of EVCEC model by changing the integrality constraint from $y_{n,j,k} \in \lbrace 0,1 \rbrace $ to $y_{n,j,k} \in [0,1] $. The following Lemma provides a feasible solution from the partial relaxation of the model presented in Section \ref{Model}:

\begin{lemma}\label{Lemma:RelaxYFeasible}
If $(x,y_L)=\lbrace (x_{n,j},y_{n,j,k}^L) \vert \forall n \in \mathcal{N}, \forall j \in \mathcal{J}, k \in \lbrace 1,\dots,M_j \rbrace  \rbrace$ is a feasible solution of REVCEC, then $(x_{n,j},y_{n,j,k}^I)$ is a feasible solution of EVCEC, where:
\begin{equation*}
y_{n,j,k}^I =  
\begin{cases} 
1 & k = \max \lbrace k :y_{n,j,k}^L > 0 \rbrace  \\
0 & \mbox{otherwise} 
\end{cases}
\end{equation*}

\end{lemma}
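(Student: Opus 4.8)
The plan is to verify the constraints of EVCEC one family at a time for the rounded point $(x,y^I)$, exploiting the fact that only the charger variables are modified: the station variables $x_{n,j}$, the selection probabilities $\alpha_{n,i,j}$, and the linearization variables $z_{n,i,j,k}$ are all inherited verbatim from the REVCEC solution. Consequently the coverage constraints (\ref{eq:MPconstraint2}), the probability constraints (\ref{eq:MPconstraint3}), the linearization constraints (\ref{eq:MPconstraint4})–(\ref{eq:MPconstraint6}) and (\ref{eq:MPconstraint12}), the station-monotonicity constraints (\ref{eq:MPconstraint8}), and the integrality of $x$ hold automatically, since none of them involves $y$ and both sides are unchanged. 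The binary requirement (\ref{eq:MPconstraint11}) on $y^I$ holds by construction. It therefore remains to check the three families that couple $x$ and $y$: the assignment identity (\ref{eq:MPconstraint7}), the congestion constraint (\ref{eq:MPconstraint1}), and the intertemporal charger-monotonicity (\ref{eq:MPconstraint9}).

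For (\ref{eq:MPconstraint7}) I would argue by cases on $x_{n,j}$. If $x_{n,j}=0$, then (\ref{eq:MPconstraint7}) for $(x,y^L)$ forces $\sum_k y^L_{n,j,k}=0$; since $y^L\ge 0$ every term vanishes, the index set $\{k:y^L_{n,j,k}>0\}$ is empty, $y^I_{n,j,\cdot}\equiv 0$, and both sides equal $0$. If $x_{n,j}=1$, then $\sum_k y^L_{n,j,k}=1>0$, so the support is nonempty, $k^\ast:=\max\{k:y^L_{n,j,k}>0\}$ is well defined, exactly one coordinate $y^I_{n,j,k^\ast}=1$, and $\sum_k y^I_{n,j,k}=1=x_{n,j}$.

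The congestion constraint (\ref{eq:MPconstraint1}) is where the direction of rounding matters. Its left-hand side depends only on the inherited variables, so it suffices to show that rounding does not decrease the right-hand side $\mu\sum_k y_{n,j,k}\,\rho_{\alpha,k}$. Using $y^L_{n,j,k}=0$ for $k>k^\ast$ together with $\sum_{k\le k^\ast}y^L_{n,j,k}=x_{n,j}\le 1$, the relaxed right-hand side is a nonnegative combination of the values $\{\rho_{\alpha,k}\}_{k\le k^\ast}$ with weights summing to at most $1$, while the rounded right-hand side equals $\mu\,\rho_{\alpha,k^\ast}$ (the case $x_{n,j}=0$ being trivial since both sides vanish). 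The key fact I would invoke is that $\rho_{\alpha,k}$ is nondecreasing in the number of servers $k$ — an $M/M/k$ system tolerates a higher utilization threshold for the same service level $\alpha$ as servers are added, which can be read off the monotonicity of $L(m,b,\rho)$ recorded earlier. Given this, $\rho_{\alpha,k^\ast}=\max_{k\le k^\ast}\rho_{\alpha,k}$ dominates that combination, the right-hand side only grows, and (\ref{eq:MPconstraint1}) is preserved.

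The main obstacle is constraint (\ref{eq:MPconstraint9}). After rounding, its two sides collapse to the rounded charger counts $k^\ast_{a(n)}$ and $k^\ast_{n}$, so (\ref{eq:MPconstraint9}) becomes the purely combinatorial requirement $k^\ast_{a(n)}\le k^\ast_{n}$. This does \emph{not} follow from the relaxed weighted-sum inequality alone: a father with $y^L$ split as $(\tfrac12,0,\tfrac12)$ over $k\in\{1,2,3\}$ and a child with $y^L=(0,1,0)$ both have weighted count $2$, yet rounding sends the father to $3$ and the child to $2$. I would therefore read (\ref{eq:MPconstraint9}) in its cumulative form $\sum_{k'\ge k}y_{a(n),j,k'}\le\sum_{k'\ge k}y_{n,j,k'}$ for every threshold $k$ — the natural encoding of ``the charger count does not decrease'' — and evaluate it at $k=k^\ast_{a(n)}$: the left side is then positive, forcing some $y^L_{n,j,k'}>0$ with $k'\ge k^\ast_{a(n)}$, whence $k^\ast_{n}\ge k^\ast_{a(n)}$, which is exactly (\ref{eq:MPconstraint9}) for the rounded point. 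Pinning down precisely which form of (\ref{eq:MPconstraint9}) is in force — or, failing that, supplying the nesting structure that the relaxed feasible region enjoys — is the crux on which the whole lemma turns.
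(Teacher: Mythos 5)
Your constraint-by-constraint plan is the same skeleton as the paper's own proof, but your execution is considerably more careful, and the obstacle you flag at the end is not a defect of your argument --- it exposes a genuine flaw in the paper. The paper's proof disposes of constraint (\ref{eq:MPconstraint9}) by bare assertion: it writes the relaxed inequality $\sum_k k\, y^L_{a(n),j,k} \le \sum_k k\, y^L_{n,j,k}$, remarks that $y \in [0,1]$, and concludes the same inequality for $y^I$ with no intervening argument. Your example --- father $y^L = (\tfrac12,0,\tfrac12)$, child $y^L = (0,1,0)$, both weighted sums equal to $2$, rounding to $3$ versus $2$ --- is a direct counterexample to exactly that implication: both points satisfy (\ref{eq:MPconstraint7}), (\ref{eq:MPconstraint8}) and (\ref{eq:MPconstraint9}) in the relaxation, and the congestion constraint for suitable demand data, yet the rounded pair violates (\ref{eq:MPconstraint9}). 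So with (\ref{eq:MPconstraint9}) in the weighted-sum form the paper actually states, the lemma is false as written; your cumulative reformulation $\sum_{k' \ge k} y_{a(n),j,k'} \le \sum_{k' \ge k} y_{n,j,k'}$ (or any added structure forcing nested supports across father and child) is a repair of the \emph{model}, not something that can be extracted from the printed constraints. You are right that this is the crux, and the paper does not resolve it.

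Two further points of comparison. First, the paper's proof never touches the congestion constraint (\ref{eq:MPconstraint1}), whose right-hand side does depend on $y$; your treatment --- left-hand side inherited, relaxed right-hand side a nonnegative combination of $\{\rho_{\alpha,k}\}_{k \le k^\ast}$ with weights summing to $x_{n,j}\le 1$, dominated by $\mu\rho_{\alpha,k^\ast}$ --- supplies the argument the paper is missing. Note, however, that the monotonicity you invoke, $\rho_{\alpha,k}$ nondecreasing in the number of servers $k$, does not follow from the only monotonicity the paper records (that of $L(m,b,\rho)$ in $\rho$); it additionally needs $L$ to be monotone in $m$ for fixed $\rho$, a standard but separate queueing fact that you should prove or cite. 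Second, your case analysis for (\ref{eq:MPconstraint7}) is the rigorous version of the paper's one-line identity $x_{n,j} = \sum_k y^L_{n,j,k} = \sum_k y^I_{n,j,k}$ and is correct. In short: where your proof is complete it is correct and strictly more thorough than the published one; where it stalls, the published proof is wrong rather than your attempt deficient.
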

\begin{proof}For REVCEC, the constraints are automatically satisfied. Other constraints are also satisfied by $(x_{n,j},y_{n,j,k}^I)$ since:

\begin{center}
\begin{eqnarray*}
x_{n,j} = \sum_{k=1}^{M_j} y_{n,j,k}^L = \sum_{k=1}^{M_j} y_{n,j,k}^I
\end{eqnarray*}
\end{center}

\begin{center}
\begin{eqnarray*}
\sum_{k=1}^{M_j} k y_{a(n),j,k}^L \leq \sum_{k=1}^{M_j} k y_{n,j,k}^L  
\end{eqnarray*}
\end{center}
Where $y_{n,j,k} \in [0,1]$ for $n \in \mathcal{N},  j \in \mathcal{J}$, and $  k \in \lbrace 1,\dots M_j \rbrace $, then
\vspace{-20pt}
\begin{center}
\begin{eqnarray*}
\sum_{k=1}^{M_j} k y_{a(n),j,k}^I \leq \sum_{k=1}^{M_j} k y_{n,j,k}^I  
\end{eqnarray*}
\end{center}
So $(x_{n,j},y_{n,j,k}^I)$ is a feasible solution of EVCEC.
\end{proof}

Lemma~\ref{Lemma:RelaxYFeasible} allows to efficiently obtain feasible solutions as input for the column generation algorithm, which will be detailed in the following sections.  In addition, Lemma~\ref{Lemma:RelaxYFeasible} yields a straightforward approximation algorithm (Algorithm 1).

\subsection{Heuristic algorithm}\label{HeurAlgo}
In the heuristic proposed by \cite{chen2022capacity},  when $M_j$ takes small values, it may yield to infeasible solutions. In the present work, we refine the heuristic by replacing a constraint to handle congestion. The number of charging posts is calculated considering demand and the logit choice model. When this estimation exceeds $M_j$, a new charging station is allocated at an empty location $j \in \mathcal{J}$, prioritizing the location covering the most zones. Then the demands for each station are re-computed. This process repeats until demand is satisfied for all $i$ in $\mathcal{I}$. Three criteria are used to determine the next empty location to enhance the diversity of feasible solutions: the location covering the most zones, the location with the lowest cost, and the location with the lowest ratio of charging station average total cost to the number of covered zones.

\section{BRANCH-AND-PRICE ALGORITHM}\label{B&P}

The large-scale mixed-integer linear programming model proposed in the previous section differs from the EVCE model proposed by \cite{chen2022capacity} in that it requires $\vert \mathcal{N} \vert \sum_{j \in J} M_j$ binary variables to handle the number of charging posts.  This increment of variables impacts on the complexity of the corresponding algorithm employed to solve it.  The EVCEC model possesses a particular structure that enables the utilization of the Dantzig-Wolfe decomposition procedure to address the linear program at every node within the branch-and-bound tree.

Constraints (\ref{eq:MPconstraint8}) and (\ref{eq:MPconstraint9}) establish a connection between different scenario-tree nodes, being part of the master problem. On the other hand, (\ref{eq:MPconstraint1})-(\ref{eq:MPconstraint8}) are specific to a scenario-tree node $n$, forming part of the pricing problem for that scenario. In the subsequent content of this section, definitions for master problem, restricted master problem and pricing problems are provided.

For each $n \in \mathcal{N}$ let us define the set of binary points $\mathcal{V}=\lbrace (x_{n,j},y_{n,j,k}) \vert \text{ for all } n \in \mathcal{N}, \forall j \in \mathcal{J}, k=1,2, \dots, M_j \rbrace$ that satisfies constraints (\ref{eq:MPconstraint8}) and (\ref{eq:MPconstraint9}).  Similarly, for each $n \in  \mathcal{N}$, let us define $\mathcal{X}_n=\lbrace (x_{n,j},y_{n,j,k}) \vert \text{ for all } n \in \mathcal{N}, \text{ for all } j \in \mathcal{J}, k=1,2, \dots, M_j \rbrace$, with $x_{n,j},y_{n,j,k} \in \lbrace 0,1 \rbrace$.  Assume $\mathcal{X}_n$ contains all binary points that satisfy the constraints:  

\begin{alignat}{2}
\sum_{i \in \mathcal{I}_{n,j}} \theta_{n,i} \bigg( w_{n,i} \alpha_{n,i,j} + b_{n,i}  \sum_{k \in \mathcal{J}_{n,i}} z_{n,i,j,k} \bigg) &\leq \mu  \sum_{k=1}^{M_j} y_{n,j,k} \rho_{\alpha , k} \quad \forall j \in \mathcal{J} \label{eqn:Queue} \tag{4a} \\  
\sum_{k \in \mathcal{J}_{n,i}} x_{n,k} & \geq 1 \quad  \forall i \in \mathcal{I} \label{eqn:AllCovered} \tag{4b}\\ 
\sum_{k=1}^{M_j} y_{n,j,k} & =  x_{n,j} \quad \forall j \in \mathcal{J} \label{eqn:XSumYEqual} \tag{4c} 
\end{alignat}

With $\alpha_{n,i,j}=\frac{e_{i,j} x_{n,j}}{  \sum_{k \in \mathcal{J}_{n,i}} e_{i,k} x_{n,k} }$ and $z_{n,i,j,k}=\alpha_{n,i,j} x_{n,k}$, $\forall i \in \mathcal{I}$.  $\mathcal{X}_n$ is a finite set of binary points.  Therefore it can be written as $\mathcal{X}_n = \lbrace (x_{n,j}^q,y_{n,j,k}^q) \vert q =1,\dots, \mathit{Q}_n  \rbrace$.  Let $S= \cup_{n \in \mathcal{N}}  \mathcal{X}_n $ be the complete set of feasible solutions.  Any point $(x_{n,j},y_{n,j,k}) \in  \mathcal{X}_n$ can be expressed as $x_{n,j} =\sum_{q=1}^{\mathit{Q}_n} \lambda_n^q x_{n,j}^q \ ) $ and $y_{n,j,k} =\sum_{q=1}^{\mathit{Q}_n} \lambda_n^q y_{n,j,k}^q \ ) $ $\text{ for all } j \in \mathcal{J},k=1 \dots M_j$ for $\sum_{q=1}^{\mathit{Q}_n} \lambda_n^q = 1$ and $\lambda_n^q \in \lbrace 0,1 \rbrace$, $\text{ for all } q = 1,\dots,\mathit{Q}_n$.  Substituting this representation in the EVCEC is called discretization.  Hence the master problem can be reformulated as follows:

\begin{center}
\begin{alignat}{1}
\min \quad  \sum_{n \in \mathcal{N}} \sum_{q=1}^{\mathit{Q}_n} \biggl( \sum_{j \in \mathcal{J}} \Bigl( c_{n,j}^{(1)} x_{n,j}^q + c_{n,j}^{(2)} \sum_{k = 1}^{M_j} k \hspace{1mm} y_{n,j,k}  & \Bigl)  \biggl) \lambda_n^q - \Psi \label{eqn:ObjMaster} \tag{5a} \\
\sum_{q=1}^{\mathit{Q}_n} x_{n,j}^q  \lambda_n^q - \sum_{q=1}^{\mathit{Q}_{a(n)}} x_{a(n),j}^q  \lambda_{a(n)}^q &\geq 0 \quad \forall n \in \mathcal{N} \quad \forall j \in \mathcal{J} \label{eqn:LinkCsMaster} \tag{5b} \\
\sum_{q=1}^{\mathit{Q}_n} \sum_{k = 1}^{M_j}k \hspace{1mm} y_{n,j,k}^q  \lambda_n^q - \sum_{q=1}^{\mathit{Q}_{a(n)}} \sum_{k = 1}^{M_j} k \hspace{1mm} y_{a(n),j,k}^q  \lambda_{a(n)}^q &\geq 0 \quad \forall n \in \mathcal{N} \quad \forall j \in \mathcal{J} \label{eqn:LinkChpMaster} \tag{5c}\\
\sum_{q=1}^{\mathit{Q}_n} \lambda_n^q &=  1 \quad \forall n \in \mathcal{N} \label{eqn:LambdaMaster} \tag{5d}\\
\lambda_n^q \in \lbrace 0,1 \rbrace  \quad \forall n \in \mathcal{N} \quad  & \forall q \in  \lbrace 1,\dots  \mathcal{Q}_n \rbrace \label{Def:VarMaster} \tag{5e}
\end{alignat}    
\end{center}

where $c_{n,j}^{(1)}=\phi (c_{n,j}^{s} +c_{n,j}^{-s} ) - \sum_{n' \in s(n)} \phi_{n'} c_{n',j}^{s}$,  $c_{n,j}^{(2)}=\phi (c_{n,j}^{p} +c_{n,j}^{-p} ) - \sum_{n' \in s(n)} \phi_{n'} c_{n',j}^{p}$ and $\psi = \phi_1 \sum_{q=1}^{j \in \mathcal{J}} \Bigl( c_{1,j}^{s} x_{0,j} + c_{1,j}^{p} \sum_{k = 1}^{M_j} k y_{0,j,k} \Bigr)$

By removing the integrality constraints for $\lambda_n^q$ and adding the non-negativity constraint $\lambda_n^q \geq 0$ we obtain the relaxation for the master problem.  At this point, instead of using the Simplex algorithm for solving the relaxation and considering a typically quite large number of columns, a column generation algorithm is used to handle the master problem.

The idea behind the column generation technique consists of choosing the column with the most negative reduced cost.  This column is determined by an optimization problem, called the pricing problem. Since there are $n$ scenarios, $n$ pricing problem must be solved to determine whether at most $n$ columns can be added to the current restricted master problem.  The column generation algorithm requires an initial set of columns to start. Let suppose that a subset of $\mathcal{Q}_n$ points are available for scenario $n$, leading to the following restricted master problem:

\begin{center}
\begin{alignat}{1}
\min \quad  \sum_{n \in \mathcal{N}} \sum_{q \in \mathcal{Q}_n} \biggl( \sum_{j \in \mathcal{J}} \Bigl( c_{n,j}^{(1)} x_{n,j}^q +  c_{n,j}^{(2)} \sum_{k = 1}^{M_j} k \hspace{1mm} y_{n,j,k} \Bigl)  \biggl)  \lambda_n^q - \Psi& \label{eqn:ObjRMP} \tag{6a} \\[2ex]
\label{eq:RMPconstraint1}
\sum_{q \in \mathcal{Q}_n} x_{n,j}^q  \lambda_n^q - \sum_{q \in \mathcal{Q}_{a(n)}} x_{a(n),j}^q  \lambda_{a(n)}^q \geq 0 \quad \forall n \in \mathcal{N} \quad &\forall j \in \mathcal{J} \tag{6b} \\[2ex]
\label{eq:RMPconstraint2}
\sum_{q \in \mathcal{Q}_n} \sum_{k = 1}^{M_j}k \hspace{1mm} y_{n,j,k}^q  \lambda_n^q - \sum_{q \in \mathcal{Q}_{a(n)}} \sum_{k = 1}^{M_j} k \hspace{1mm} y_{a(n),j,k}^q  \lambda_{a(n)}^q \geq 0 \quad \forall n \in \mathcal{N} \quad & \forall j \in \mathcal{J} \tag{6c} \\[2ex]
\label{eq:RMPconstraint3}
\sum_{q \in \mathcal{Q}_n} \lambda_n^q =  1 \quad  \forall n \in \mathcal{N} \quad & \tag{6d}\\[2ex]
\lambda_n^q \geq 0 \quad \forall n \in \mathcal{N} \forall q \in \mathcal{Q}_n \quad  & \tag{6e}
\end{alignat}    
\end{center}

Let $\pi_{n,j}^{(1)}$ and $\pi_{n,j}^{(1)}$ the dual variables associated with the constraints (\ref{eq:RMPconstraint1}) and (\ref{eq:RMPconstraint2}) for each $n \in \mathcal{N}$ and $j \in \mathcal{J}$, respectively. $\mu$ is the dual variable associated with constraint (\ref{eq:RMPconstraint3}) for each $n \in \mathcal{N}$.
For every scenario $n \in \mathcal{N}$, the subproblem $SP(n)$ is defined as follows:
\begin{center}
\begin{alignat}{1}
\min \quad \biggr[ \biggl( c_{n,j}^{(1)} - \pi_{n,j}^{(1)} + \sum_{n' \in s(n) }  \pi_{n',j}^{(1)} \biggl)x_{n,j} + \nonumber \\ \sum_{k=1}^{M_j}  \biggl( c_{n,j}^{(2)} - \pi_{n,j}^{(2)} + \sum_{n' \in s(n)}  \pi_{n',j}^{(2)} \biggl) k \hspace{1mm} y_{n,j,k}   \biggr] \label{ObjPri} \tag{7a}
\end{alignat}
\end{center}

\begin{center}
\begin{alignat}{1}
s.t. &  \sum_{i \in \mathcal{I}_{n,j}} \theta_{n,i} \bigg( w_{n,i} \alpha_{n,i,j} + b_{n,i}  \sum_{k \in \mathcal{J}_{n,i}} z_{n,i,j,k} \bigg) \leq \mu  \sum_{k=1}^{M_j} y_{n,j,k} \rho_{\alpha , k} \nonumber \\ \quad 
&\forall n \in \mathcal{N} \quad \forall j \in \mathcal{J}\tag{7b} \\[2ex]
&\sum_{k \in \mathcal{J}_{n,i}} x_{n,k} \geq 1 \quad \forall n \in \mathcal{N} \quad \forall i \in \mathcal{I} \tag{7c} \\[2ex]
&\sum_{k \in \mathcal{J}_{n,i}} e_{i,k} z_{n,i,j,k} =  e_{i,j}x_{n,j} \quad
\forall n \in \mathcal{N} \quad \forall i \in \mathcal{I} \quad \forall j \in \mathcal{J}_{n,i} \tag{7d} \\[2ex]
&z_{n,i,j,k} \leq x_{n,k} \quad \forall n \in \mathcal{N} \quad \forall i \in \mathcal{I} \quad \forall j \in \mathcal{J}_{n,i} \quad \forall k \in \mathcal{J}_{n,i} \tag{7e} \\[2ex]
&z_{n,i,j,k} \leq \alpha_{n,i,j} \quad \forall n \in \mathcal{N} \quad \forall i \in \mathcal{I} \quad \forall j \in \mathcal{J}_{n,i} \quad \forall k \in \mathcal{J}_{n,i} \tag{7f} \\[2ex]
&z_{n,i,j,k} \geq \alpha_{n,i,j}+ x_{n,k}-1 \quad \forall n \in \mathcal{N} \quad \forall i \in \mathcal{I} \quad \forall j \in \mathcal{J}_{n,i} \quad \forall k \in \mathcal{J}_{n,i} \tag{7g} \\[2ex] 
&\sum_{k=1}^{M_j} y_{n,j,k} =  x_{n,j} \quad \forall n \in \mathcal{N} \quad \forall j \in \mathcal{J} \tag{7h} 
\end{alignat}    
\end{center}

\begin{center}
\begin{alignat}{1}
&x_{n,j} \in \lbrace 0,1 \rbrace \tag{7i} \\[2ex] 
&y_{n,j,k} \in  \lbrace 0,1 \rbrace \quad  \forall n \in \mathcal{N} \quad j \in \mathcal{J} \quad k=1,...,M_j \tag{7j} \\[2ex] 
&z_{n,i,j,k} \geq 0 \quad \forall n \in \mathcal{N} \quad \forall i \in \mathcal{I} \quad j \in \mathcal{J}_{n,i} \quad k \in \mathcal{J}_{n,i} \tag{7k}
\end{alignat}    
\end{center}

In~\cite{chen2022capacity}'s work, the number of charging posts is represented by an integer variable. When the master problem is solved and 
this integer is fractional, rounding up to the nearest integer yields a feasible solution because $M_
j$ is large. In the present study, with smaller values of
$M_j$ and binary variables $y_{n,j,k}$ representing the number of charging stations, solving the linear programming master problem may result in fractional values. Lemma~\ref{Lemma:RelaxYFeasible} yields feasible solutions, the current best solution is typically not improved in the branch and bound tree. Thus, a local search procedure is needed.

\subsection{Upper Bounds}
The branch and price algorithm benefits from an initial solution that is close to optimal to minimize the duality gap right from the beginning.

Let denote by $y_{n,j}^F = \sum_{k=1}^{M_j} k y_{n,j,k}^I$, the number of charging post determined by either the heuristic or the partial relaxation.  $y_{n,j,k}^I$ can be substituted by:
\begin{center}
\begin{eqnarray*}
\underset{y_{n,j,k}}{\arg\min}  = \lbrace  \sum_{k=1}^{M_j} y_{n,j,k} \rho_{\alpha , k} : y_{n,j}^F \leq \sum_{k=1}^{M_j} y_{n,j,k} \rho_{\alpha , k}   \rbrace
\end{eqnarray*}
\end{center}    

Despite the fact that the feasible solution is very close, or even equal to, the optimal solution, the branch and price algorithm still needs to undergo an undetermined number of iterations to prove optimality. This is done by reducing the duality gap within a specified tolerance. The following section will illustrate how the number of iterations for the column generation algorithm can be lowered.

Starting from the relaxed master problem solution, a feasible solution is obtained. If $x_{0,j}=1$, we forcibly set it to $x_{0,j}=0$ and apply a heuristic to find a feasible solution. If the objective function for this feasible solution yields a value lower than the current best solution, the upper bound is updated. Finally, the best feasible solution obtained is added as a new column.

\subsection{Lower Bounds}
The integer program presented in Section \ref{Model} is intractable for the Branch and Price algorithm proposed by~\cite{chen2022capacity}, even when dealing with small instances. This difficulty arises due to the large number of iterations required by the column generation method to solve the restricted master problem. The increase of number of binary variables for the EVCEC model significantly impacts the behavior of the duality gap for different values of congestion.  To address this issue, we exploit the inter-dependencies among the pricing problems $S(n)$ in the Dantzig-Wolfe decomposition. 

Specifically, each subproblem $S(n)$ is associated with the pricing problem $S(a(n))$. The pricing problems are systematically solved following the sequence, $1,\dots, a(n),n,\dots, |\mathcal{N}|$. Consequently, before attempting to solve the pricing problem $S(n)$, it is already known whether problem $S(a(n))$ exhibits a negative reduced cost.  Let $(x_{a(n),j},y_{a(n),j,k'})$ be the column corresponding to pricing problem  $S(a(n))$ with a negative reduced cost.  When assuming that $x_{a(n),j} = 1$  and  $y_{a(n),j,k'} = 1$ for some $n \in \mathcal{N}$,  $j \in \mathcal{J}$ and $k' \in \lbrace 1 \dots M_j \rbrace$,  $k'$ charging posts will be located at $j$ in scenario $n$.   The following constraints are added to the pricing problem $S(n)$:

By incorporating this set of constraints, search space for new columns in scenario $n$ is substantially reduced.  Without considering these constraints as part of $S(n)$, the generation of columns falls into one of the following two cases:

\begin{enumerate}
\item Infeasible columns are added to the current restricted master problem.  Consequently, improving the upper bound becomes challenging.
\item Non-smooth behavior exhibited by the lower bound through the column generation process, which delays the convergence of the duality gap.
\end{enumerate}

\section{NUMERICAL EXPERIMENTS}\label{NumExp}

We generated instances of different sizes using randomized data to test the effectiveness of the approximation, the heuristic, and the Branch and Price algorithms. Table \ref{tab:ScaleOfInstances} presents the number of variables, constraints, whether the instance corresponds to the congestion model, the maximum number of charging post allowed $\mathit{M_j}$  for all $j \in \mathcal{J}$, and nonzero elements for the instances employed in this evaluation.  Every instance has $\vert \mathcal{N} \vert \cdot \vert \mathcal{J} \vert$ binary variables ($x_{n,j}$) and $\vert \mathcal{N} \vert \cdot \vert \mathcal{J} \vert$ integer variables ($y_{n,j}$) for the non-congestion case.  For the congestion case, every instance has $\vert \mathcal{N} \vert \cdot \vert \mathcal{J} \vert$ binary variables ($x_{n,j}$) and $\vert \mathcal{N} \vert \cdot \vert \mathcal{J} \vert \cdot M_j$ binary variables ($y_{n,j,k}$), where $M_j$ remains unchanged for all $j$ in $\mathcal{J}$.
Note that for addressing congestion at location $j \in \mathcal{J}$, we transition from a single integer variable to $M_j$  binary variables for all $j$ in $\mathcal{J}$, thereby augmenting the dimension of the integer optimization problem.  As an illustration, consider instances indices $2$ and $4$.  For the non-congestion case, at the instance index $2$, we observe $120$ binary variables $X_{n,j}$ and $120$ integer variables $Y_{n,j}$, with $10$ as the maximum number of charging posts allowed for all $j$ in $\mathcal{J}$. On the other hand, for the instance index $4$, we have $120$ binary variables $x_{n,j}$ and $1\,200$ binary variables $Y_{n,j,k}$. This illustration highlights the challenge of incorporating congestion into the formulation proposed by~\cite{chen2022capacity}. The experiments were carried out by using a PC with 32GB RAM and a i$7$ $2.6$ GHz processor. Denote by $t^{\ast}$ the solution time and $z^{\ast}$ the optimal objective value obtained by the CPLEX MIP solver when solving the instances to optimality.

Table \ref{tab:AvgQueueMeasures} compares the non-congestion and congestion models' objective values and queue measures. For instance, observe instance index $2$, where $10$ indicates the maximum allowed number of charging posts per location $j$ for all $j \in \mathcal{J}$. We shall compare the queue measures between the solutions obtained from both \cite{chen2022capacity} non-congestion model and the congestion extension introduced in this paper.  The solution of the non-congestion model, when applied to a queue $M/M/S$ system, corresponds to an average waiting time of $143.89$ minutes, an average of $75.2$ electric vehicles in the queue, and an objective value of $22\,731$. On the other hand, for the congestion-capturing model, it is pertinent to recall that the solution satisfies the congestion constraint:

\begin{equation}\label{ProbServCons}
P(\text{facility $j$ has at most $b$ EVs in the queue}) \geq  \alpha
\end{equation}

Inequality (\ref{ProbServCons}) states that the probability of a user encountering a queue with no more than $b$ other electric vehicles is at least equal to the specified threshold $\alpha$. For the experiments conducted in this section, the chosen level of service $\alpha$ is set to $0.9$.

Next, for the congestion model instance at index 4, where $M_j=10$ is the maximum permissible number of charging posts, we notice a monotonic decrease in the objective value as $b$ increases. As the system capacity is augmented, customers can endure waiting in the queue, resulting in a reduced demand for resources to meet their needs and, consequently, yielding a lower value of the objective function.  Note that for $|\mathcal{N}|=16$, the queue measures were computed based on the best feasible solution solver achieved within $7\,200$ seconds, that is, the maximal solution time set in the solver.

Table \ref{tab:AppAlgEfficiency} shows the solution time $t_{appr}$, the objective value of the approximation solution $z_{appr}=f(X_I,Y_I)$, and the solution gap $GAP_{LB} =\frac{z_{appr} - LB_{appr}}{z_{appr}}$, where $LB_{appr}$ is the lower bound $f(X_I,Y_L)$.  For the heuristic algorithm, Table 5 shows the solution time $t_{heur}$ and the objective value of the heuristic solution $z_{heur}=f(X_{heur},Y_{heur})$. The percentages of time savings $TS_{app}=\frac{t^{\ast} - t_{appr}}{t^{\ast}} \times 100 \%$, $TS_{heur}=\frac{t^{\ast} - t_{heur}}{t^{\ast}} \times 100 \%$, and objective gaps $GAP_{appr} =\frac{z_{appr} - z^{\ast}}{z_{appr}}$ and $GAP_{heur} =\frac{z_{heur} - z^{\ast}}{z_{heur}}$  are also displayed. When time is out, the algorithm terminates, and the best current objective value is reported. In cases where the CPLEX MIP solver fails to solve the instance to optimality in $7\,200s$, the gap between its upper and lower bounds is reported together with the current best objective value. In these instances, we use the lower bound obtained by the CPLEX MIP solver to estimate $GAP_{appr}$ and $GAP_{heur}$. Hence, from Table \ref{tab:AppAlgEfficiency}, we observe that the approximation algorithm gives solutions close to the optimal for smaller instances.  However, this algorithm is unsuitable for larger instances where the optimal solution cannot be achieved within $7\,200$s. 

Table \ref{tab:HeurAlgEfficiency} shows that the heuristic algorithm obtains good feasible solutions in seconds. In this case, the gap between the heuristic solution and the optimal solution can not be computed because there is no lower bound from the heuristic. In Table \ref{tab:HeurAlgEfficiency}, we observe that for the instance $8$ where $|\mathcal{N}|=16$, the heuristic obtains better feasible solutions in a matter of seconds than the CPLEX MIP solver after $7\,200$ seconds.  This helpful fact allows feasible solutions from the Heuristic to be the initial columns for the Branch and Price procedure.

Finally, Table \ref{tab:BPAlgEfficiency} presents the efficiency of the Branch and Price algorithm. For instances 3 and 4, Branch and Price reaches the optimal solution in less time. For instances 7 and 8, after 7200 seconds, Branch and Price consistently achieves a lower MIP gap compared to the solver.


\begin{table*}[htb]
\begin{center}
\centering
\begin{tabular}{ccccccccc}
\rowcolor[HTML]{C0C0C0} 
\textbf{Instance} &
  \textbf{Instance Size} &
  \textbf{} &
  \textbf{} &
  \textbf{} &
  \textbf{} &
  \textbf{} &
  \textbf{} &
  \textbf{\# binary} \\
\rowcolor[HTML]{C0C0C0} 
\textbf{index} &
  \textbf{$\vert \mathcal{I} \vert$}  &
  \textbf{$\vert \mathcal{J} \vert$}  &
  \textbf{$\vert \mathcal{N} \vert$}  &
  \textbf{Congestion} &
  \textbf{$M_j$} &
  \textbf{\# var.} &
  \textbf{\# cons.} &
  \textbf{var.} \\
1 & 10 & 15 & 8  & \cite{chen2022capacity}  & 8  & 4\,496  & 12\,272 & 1\,080 \\
\rowcolor[HTML]{EFEFEF} 
2 & 10 & 15 & 8  & \cite{chen2022capacity}  & 10 & 4\,496  & 12\,272 & 1\,320 \\
3 & 10 & 15 & 8  & Current work & 8  & 5\,336  & 12\,272 & 1\,080 \\
\rowcolor[HTML]{EFEFEF} 
4 & 10 & 15 & 8  & Current work & 10 & 5\,576  & 12\,272 & 1\,320 \\
5 & 10 & 25 & 16 & \cite{chen2022capacity}  & 8  & 18\,944 & 53\,024 & 3\,600 \\
\rowcolor[HTML]{EFEFEF} 
6 & 10 & 25 & 16 & \cite{chen2022capacity}  & 10 & 18\,944 & 53\,024 & 4\,400 \\
7 & 10 & 25 & 16 & Current work & 8  & 21\,744 & 53\,024 & 3\,600 \\
\rowcolor[HTML]{EFEFEF} 
8 & 10 & 25 & 16 & Current work & 10 & 22\,544 & 53\,024 & 4\,400
\end{tabular}
\caption{Input sizes}
\label{tab:ScaleOfInstances}
\end{center}
\end{table*}


\begin{table*}[htb]
\centering
\begin{tabular}{cccccccc}
\rowcolor[HTML]{C0C0C0} 
\textbf{Instance} &
  \textbf{} &
  \textbf{} &
  \textbf{} &
  \multicolumn{2}{c}{\cellcolor[HTML]{C0C0C0}\textbf{CPLEX}} &
  \multicolumn{2}{c}{\cellcolor[HTML]{C0C0C0}\textbf{Average Queue Measures}} \\
\rowcolor[HTML]{C0C0C0} 
\textbf{index} &
  \textbf{Congestion} &
  \textbf{M\_j} &
  \textbf{b} &
  \textbf{Z*} &
  \textbf{MIP GAP} &
  \textbf{Waiting (min.)} &
  \textbf{Queue Size} \\
1 & \cite{chen2022capacity}  & 8  & - & 23\,359 & 0.01\%  & 54.49  & 26.8 \\
\rowcolor[HTML]{EFEFEF} 
2 & \cite{chen2022capacity}  & 10 & - & 22\,731 & 0.01\%  & 143.89 & 75.2 \\
3 & Current work & 8  & 0 & 45\,353 & 0.01\%  & 0.01   & 0.0  \\
\rowcolor[HTML]{EFEFEF} 
3 & Current work & 8  & 1 & 39\,990 & 0.01\%  & 0.01   & 1.0  \\
3 & Current work & 8  & 2 & 36\,895 & 0.01\%  & 0.02   & 2.0  \\
\rowcolor[HTML]{EFEFEF} 
3 & Current work & 8  & 3 & 35\,082 & 0.01\%  & 0.02   & 3.0  \\
4 & Current work & 10 & 0 & 36\,472 & 0.01\%  & 0.01   & 0.0  \\
\rowcolor[HTML]{EFEFEF} 
4 & Current work & 10 & 1 & 34\,786 & 0.01\%  & 0.01   & 1.0  \\
4 & Current work & 10 & 2 & 33\,008 & 0.01\%  & 0.01   & 2.0  \\
\rowcolor[HTML]{EFEFEF} 
4 & Current work & 10 & 3 & 31\,671 & 0.01\%  & 0.02   & 3.0  \\
5 & \cite{chen2022capacity}  & 8  & - & 39\,329 & 66.44\% & 68.57  & 17.2 \\
\rowcolor[HTML]{EFEFEF} 
6 & \cite{chen2022capacity}  & 10 & - & 34\,938 & 56.50\% & 100.59 & 41.4 \\
7 & Current work & 8  & 0 & 68\,841 & 63.57\% & 0.01   & 0.0  \\
\rowcolor[HTML]{EFEFEF} 
7 & Current work & 8  & 1 & 63\,529 & 62.51\% & 0.02   & 1.0  \\
7 & Current work & 8  & 2 & 59\,118 & 70.76\% & 0.02   & 2.0  \\
\rowcolor[HTML]{EFEFEF} 
7 & Current work & 8  & 3 & 57\,144 & 72.88\% & 0.04   & 3.0  \\
8 & Current work & 10 & 0 & 61\,591 & 67.47\% & 0.01   & 0.0  \\
\rowcolor[HTML]{EFEFEF} 
8 & Current work & 10 & 1 & 51\,333 & 69.07\% & 0.01   & 1.0  \\
8 & Current work & 10 & 2 & 52\,216 & 71.59\% & 0.02   & 2.0  \\
\rowcolor[HTML]{EFEFEF} 
8 & Current work & 10 & 3 & 48\,306 & 63.21\% & 0.03   & 3.0 
\end{tabular}
\caption{Average queue measures}
\label{tab:AvgQueueMeasures}
\end{table*}


\begin{table*}[htb]
\centering
\begin{tabular}{cccccccccccc}
\rowcolor[HTML]{C0C0C0} 
\textbf{Instance} & \textbf{}           & \textbf{}     & \textbf{}  & \multicolumn{2}{c}{\cellcolor[HTML]{C0C0C0}\textbf{CPLEX MIP Solver}} & \multicolumn{3}{c}{\cellcolor[HTML]{C0C0C0}\textbf{Algorithm 1 (Approximation)}} & \multicolumn{3}{c}{\cellcolor[HTML]{C0C0C0}\textbf{Comparison}} \\
\rowcolor[HTML]{C0C0C0} 
\textbf{Index}    & \textbf{Cong.} & \textbf{$M_j$} & \textbf{b} & \textbf{$t^*(s)$}                      & \textbf{$z^*$}                     & \textbf{$t_{appr}^*(s)$}        & \textbf{$z\_{appr}$}        & \textbf{$LB\_{appr}$}       & \textbf{$GAP\_{LB}$}   & \textbf{$TS\_{appr}$}   & \textbf{$GAP\_{appr}$}   \\
1                 & No                  & 8             & -          & 236                                 & 23\,359                         & -                           & -                       & -                       & 0.0\%              & 0.0\%               & 0.0\%                \\
\rowcolor[HTML]{EFEFEF} 
2                 & No                  & 10            & -          & 243                                 & 22\,731                         & -                           & -                       & -                       & 0.0\%              & 0.0\%               & 0.0\%                \\
3                 & Yes                 & 8             & 0          & 194                                 & 45\,353                         & 62                          & 51\,506                 & 37\,690                 & 16.9\%             & 68.0\%              & 13.6\%               \\
\rowcolor[HTML]{EFEFEF} 
3                 & Yes                 & 8             & 1          & 430                                 & 39\,990                         & 116                         & 43\,693                 & 33\,158                 & 17.1\%             & 73.0\%              & 9.3\%                \\
3                 & Yes                 & 8             & 2          & 384                                 & 36\,895                         & 108                         & 41\,009                 & 30\,846                 & 16.4\%             & 71.9\%              & 11.1\%               \\
\rowcolor[HTML]{EFEFEF} 
3                 & Yes                 & 8             & 3          & 1\,438                              & 35\,082                         & 700                         & 40\,107                 & 29\,435                 & 16.1\%             & 51.3\%              & 14.3\%               \\
4                 & Yes                 & 10            & 0          & 285                                 & 36\,472                         & 128                         & 39\,944                 & 31\,350                 & 14.0\%             & 55.1\%              & 9.5\%                \\
\rowcolor[HTML]{EFEFEF} 
4                 & Yes                 & 10            & 1          & 716                                 & 34\,786                         & 301                         & 38\,668                 & 29\,449                 & 15.3\%             & 58.0\%              & 11.2\%               \\
4                 & Yes                 & 10            & 2          & 863                                 & 33\,008                         & 397                         & 38\,565                 & 28\,175                 & 14.6\%             & 54.0\%              & 16.8\%               \\
\rowcolor[HTML]{EFEFEF} 
4                 & Yes                 & 10            & 3          & 1\,346                              & 31\,671                         & 525                         & 37\,657                 & 27\,079                 & 14.5\%             & 61.0\%              & 18.9\%               \\
5                 & No                  & 8             & -          & 7\,201                              & 39\,329                         & -                           & -                       & -                       & 0.0\%              & 0.0\%               & 0.0\%                \\
\rowcolor[HTML]{EFEFEF} 
6                 & No                  & 10            & -          & 7\,201                              & 34\,938                         & -                           & -                       & -                       & 0.0\%              & 0.0\%               & 0.0\%                \\
7                 & Yes                 & 8             & 0          & 7\,201                              & 68\,841                         & 7\,201                      & 93\,307                 & 62\,146                 & 9.7\%              & 0.0\%               & 35.5\%               \\
\rowcolor[HTML]{EFEFEF} 
7                 & Yes                 & 8             & 1          & 7\,203                              & 63\,529                         & 7\,201                      & 68\,645                 & 52\,871                 & 16.8\%             & 0.0\%               & 8.1\%                \\
7                 & Yes                 & 8             & 2          & 7\,201                              & 59\,118                         & 7\,201                      & 63\,890                 & 49\,319                 & 16.6\%             & 0.0\%               & 8.1\%                \\
\rowcolor[HTML]{EFEFEF} 
7                 & Yes                 & 8             & 3          & 7\,201                              & 57\,144                         & 7\,201                      & 59\,036                 & 46\,830                 & 18.0\%             & 0.0\%               & 3.3\%                \\
8                 & Yes                 & 10            & 0          & 7\,200                              & 51\,333                         & 7\,201                      & 63\,514                 & 48\,335                 & 5.8\%              & 0.0\%               & 23.7\%               \\
\rowcolor[HTML]{EFEFEF} 
8                 & Yes                 & 10            & 1          & 7\,201                              & 52\,216                         & 7\,201                      & 61\,374                 & 45\,424                 & 13.0\%             & 0.0\%               & 17.5\%               \\
8                 & Yes                 & 10            & 2          & 7\,201                              & 48\,306                         & 7\,201                      & 60\,281                 & 42\,597                 & 11.8\%             & 0.0\%               & 24.8\%               \\
\rowcolor[HTML]{EFEFEF} 
8                 & Yes                 & 10            & 3          & 7\,201                              & 45\,820                         & 7\,201                      & 50\,103                 & 39\,576                 & 13.6\%             & 0.0\%               & 9.3\%               
\end{tabular}
\caption{Efficiency of the approximation algorithm (Algorithm 1)}
\label{tab:AppAlgEfficiency}
\end{table*}


\begin{table*}[htb]
\centering
\begin{tabular}{ccccccccc}
\rowcolor[HTML]{C0C0C0} 
\textbf{Instance} & \textbf{}     & \textbf{}  & \multicolumn{2}{c}{\cellcolor[HTML]{C0C0C0}\textbf{CPLEX MIP Solver}} & \multicolumn{2}{c}{\cellcolor[HTML]{C0C0C0}\textbf{Heuristic}} & \multicolumn{2}{c}{\cellcolor[HTML]{C0C0C0}\textbf{Comparison}} \\
\rowcolor[HTML]{C0C0C0} 
\textbf{Number}   & \textbf{$M_j$} & \textbf{b} & \textbf{$t^*(s)$}                      & \textbf{$z^*$}                     & \textbf{$t_{heur}^*(s)$}             & \textbf{$z_{heur}$}            & \textbf{$TS_{heur}$}                      & \textbf{$GAP_{heur}$}                     \\
3                 & 8             & 0          & 194                                 & 45\,353                         & 0.03                             & 50\,638                     & 100\%                                  & 10.4\%                                 \\
\rowcolor[HTML]{EFEFEF} 
3                 & 8             & 1          & 430                                 & 39\,990                         & 0.03                             & 45\,368                     & 100\%                                  & 11.9\%                                 \\
3                 & 8             & 2          & 384                                 & 36\,895                         & 0.02                             & 42\,073                     & 100\%                                  & 12.3\%                                 \\
\rowcolor[HTML]{EFEFEF} 
3                 & 8             & 3          & 1438                                & 35\,082                         & 0.03                             & 40\,704                     & 100\%                                  & 13.8\%                                 \\
4                 & 10            & 0          & 285                                 & 36\,472                         & 0.02                             & 44\,123                     & 100\%                                  & 17.3\%                                 \\
\rowcolor[HTML]{EFEFEF} 
4                 & 10            & 1          & 716                                 & 34\,786                         & 0.02                             & 40\,502                     & 100\%                                  & 14.1\%                                 \\
4                 & 10            & 2          & 863                                 & 33\,008                         & 0.02                             & 36\,903                     & 100\%                                  & 10.6\%                                 \\
\rowcolor[HTML]{EFEFEF} 
4                 & 10            & 3          & 1\,346                              & 31\,671                         & 0.02                             & 34\,787                     & 100\%                                  & 9.0\%                                  \\
7                 & 8             & 0          & 7\,201                              & 68\,841                         & 0.07                             & 68\,159                     & 100\%                                  & -1.0\%                                 \\
\rowcolor[HTML]{EFEFEF} 
7                 & 8             & 1          & 7\,203                              & 63\,529                         & 0.83                             & 60\,164                     & 100\%                                  & -5.6\%                                 \\
7                 & 8             & 2          & 7\,201                              & 59\,118                         & 0.75                             & 55\,138                     & 100\%                                  & -7.2\%                                 \\
\rowcolor[HTML]{EFEFEF} 
7                 & 8             & 3          & 7\,201                              & 57\,144                         & 0.73                             & 50\,836                     & 100\%                                  & -12.4\%                                \\
8                 & 10            & 0          & 7\,201                              & 61\,591                         & 0.72                             & 56\,499                     & 100\%                                  & -9.0\%                                 \\
\rowcolor[HTML]{EFEFEF} 
8                 & 10            & 1          & 7\,200                              & 51\,333                         & 0.68                             & 49\,953                     & 100\%                                  & -2.8\%                                 \\
8                 & 10            & 2          & 7\,201                              & 52\,216                         & 0.65                             & 46\,913                     & 100\%                                  & -11.3\%                                \\
\rowcolor[HTML]{EFEFEF} 
8                 & 10            & 3          & 7\,201                              & 48\,306                         & 0.62                             & 44\,591                     & 100\%                                  & -8.3\%                                
\end{tabular}
\caption{Efficiency of the heuristic algorithm (Algorithm 3)}
\label{tab:HeurAlgEfficiency}
\end{table*}

\begin{table*}[htb]
\centering
\begin{tabular}{cccccccccc}
\rowcolor[HTML]{C0C0C0} 
\textbf{Instance} & \textbf{}           & \textbf{}     & \textbf{}  & \multicolumn{3}{c}{\cellcolor[HTML]{C0C0C0}\textbf{CPLEX MIP Solver}} & \multicolumn{3}{c}{\cellcolor[HTML]{C0C0C0}\textbf{B\&P}} \\
\rowcolor[HTML]{C0C0C0} 
\textbf{Number}   & \textbf{Congestion} & \textbf{$M_j$} & \textbf{$b$} & \textbf{$t^*(s)$}        & \textbf{$z^*$}        & \textbf{$GAP_{MIP}$}        & \textbf{$t_{BP}^*(s)$}  & \textbf{$z_{BP}$}  & \textbf{$GAP_{BP}$}  \\
1                 & No                  & 8             & -          & 236                   & 23\,359            & 0.0\%                    & -                   & -               & -                 \\
\rowcolor[HTML]{EFEFEF} 
2                 & No                  & 10            & -          & 243                   & 22\,731            & 0.0\%                    & -                   & -               & -                 \\
3                 & Yes                 & 8             & 0          & 194                   & 45\,353            & 0.0\%                    & 165                 & 45\,353         & 0.0\%             \\
\rowcolor[HTML]{EFEFEF} 
3                 & Yes                 & 8             & 1          & 430                   & 39\,990            & 0.0\%                    & 326                 & 39\,990         & 0.0\%             \\
3                 & Yes                 & 8             & 2          & 384                   & 36\,895            & 0.0\%                    & 294                 & 36\,895         & 0.0\%             \\
\rowcolor[HTML]{EFEFEF} 
3                 & Yes                 & 8             & 3          & 1438                  & 35\,082            & 0.0\%                    & 1132                & 35\,082         & 0.0\%             \\
4                 & Yes                 & 10            & 0          & 285                   & 36\,472            & 0.0\%                    & 213                 & 36\,472         & 0.0\%             \\
\rowcolor[HTML]{EFEFEF} 
4                 & Yes                 & 10            & 1          & 716                   & 34\,786            & 0.0\%                    & 518                 & 34\,786         & 0.0\%             \\
4                 & Yes                 & 10            & 2          & 863                   & 33\,008            & 0.0\%                    & 678                 & 33\,008         & 0.0\%             \\
\rowcolor[HTML]{EFEFEF} 
4                 & Yes                 & 10            & 3          & 1\,346                & 31\,671            & 0.0\%                    & 1\,157              & 31\,671         & 0.0\%             \\
5                 & No                  & 8             & -          & 7\,201                & 39\,329            & 66.4\%                   & -                   & -               & -                 \\
\rowcolor[HTML]{EFEFEF} 
6                 & No                  & 10            & -          & 7\,201                & 34\,938            & 56.5\%                   & -                   & -               & -                 \\
7                 & Yes                 & 8             & 0          & 7\,201                & 68\,841            & 63.6\%                   & 7\,578              & 64\,847         & 32.9\%            \\
\rowcolor[HTML]{EFEFEF} 
7                 & Yes                 & 8             & 1          & 7\,203                & 63\,529            & 62.5\%                   & 7\,590              & 59\,970         & 36.4\%            \\
7                 & Yes                 & 8             & 2          & 7\,201                & 59\,118            & 70.8\%                   & 7\,874              & 54\,330         & 41.6\%            \\
\rowcolor[HTML]{EFEFEF} 
7                 & Yes                 & 8             & 3          & 7\,201                & 57\,144            & 72.9\%                   & 7\,123              & 49\,409         & 47.2\%            \\
8                 & No                  & 10            & 0          & 7\,201                & 61\,591            & 67.5\%                   & 7\,246              & 54\,360         & 34.9\%            \\
\rowcolor[HTML]{EFEFEF} 
8                 & No                  & 10            & 1          & 7\,200                & 51\,333            & 69.1\%                   & 7\,367              & 50\,158         & 37.1\%            \\
8                 & No                  & 10            & 2          & 7\,201                & 52\,216            & 71.6\%                   & 7\,573              & 49\,038         & 43.3\%            \\
\rowcolor[HTML]{EFEFEF} 
8                 & Yes                 & 10            & 3          & 7\,201                & 48\,306            & 63.2\%                   & 7\,340              & 45\,574         & 36.1\%           
\end{tabular}
\caption{Efficiency of the branch-and-price algorithm}
\label{tab:BPAlgEfficiency}
\end{table*}

\clearpage
\section{CONCLUSIONS AND FUTURE RESEARCH DIRECTIONS}
This paper enhances \cite{chen2022capacity} model by incorporating congestion-handling constraints, creating a more realistic framework for electric vehicle station deployment. The revised stochastic integer programming model has a higher dimensionality than the original, necessitating updates to~\cite{chen2022capacity}'s algorithms to address key challenges in scaling infrastructure under uncertain, Poisson-distributed demand.  Computational experiments illustrate the effectiveness of these methods, offering insights for infrastructure planning that balance operational efficiency while accurately capturing congestion dynamics. 

Future research could explore alternative arrival processes for modeling congestion at electric vehicle charging stations, moving beyond the current Poisson framework. Incorporating non-Poisson processes could improve the realism of congestion modeling in varied urban and regional contexts. Additionally, further refinement of the branch-and-price algorithm is necessary to enhance its scalability and computational efficiency. Investigating strategies like improved initialization heuristics, and cutting plane methods could significantly reduce computation time and expand the model's applicability to larger, more complex networks. These directions would advance the practical deployment of charging networks in dynamic real-world scenarios.

\section{ACKNOWLEDGEMENTS}
The authors would like to thank an anonymous reviewer of~\cite{chen2022capacity} for suggesting the exploration of potential extensions to handle congestion. This work was partially supported by the Natural Sciences and Engineering Research Council of Canada Discovery Grant program numbers RGPIN-2020-06846 and RGPIN-2021-03478. Part of this work was done during the third author research stay at McMaster University.

\bibliography{references} 
\end{document}